\newtheorem{theorem}{Theorem}
\newtheorem{lemma}{Lemma}
\newtheorem*{definition}{Definition}
\newtheorem{proposition}{Proposition}
\newtheorem*{corollary}{Corollary}
\def\bal{\begin{aligned}}
\def\eal{\end{aligned}}
\def\be{\begin{equation}\label}
\def\ee{\end{equation}}
\def\bcs{\begin{cases}}
\def\ecs{\end{cases}}
\def\={\;=\;}
\def\+{\,+\,}
\def\-{\,-\,}
\def\Z{{\mathbb Z}}
\def\N{{\mathbb N}}
\def\Q{{\mathbb Q}}
\def\R{{\mathbb R}}
\def\F{{\mathbb F}}
\def\P{{\mathbb P}}
\def\L{\Lambda}
\def\Newt{{\rm Newt}}
\def\con{\;\equiv\;}
\def\B{{\mathcal B}}
\def\D{{\mathcal D}}
\def\O{{\mathcal O}}
\title[Dwork's congruences]{Dwork's congruences for the constant terms of powers of a Laurent polynomial}
\author{Anton Mellit, Masha Vlasenko}
\begin{document}
\date{\today}
\maketitle

\begin{abstract}
We prove that the constant terms of powers of a Laurent polynomial satisfy certain congruences modulo prime powers. As a corollary, the generating series of these numbers considered as a function of a p-adic variable satisfies a non-trivial analytic continuation property, similar to what B.~Dwork showed for a class of hypergeometric series.
\end{abstract}

\section{Congruences}

We shall prove the following

\begin{theorem}\label{Th} Let $\L(x) \in \Z_p[x_1^{\pm1},\dots,x_d^{\pm1}]$ be a Laurent polynomial, and consider the sequence of the constant terms of powers of $\L$  
\[
b_n \= \Bigl[ \L(x)^n \Bigr]_0 \,, \quad n=0,1,2,\dots
\] 
Define 
\[
f(X) \= \sum_{n=0}^{\infty} b_n X^n
\]
and 
\[
f_s(X) \= \sum_{n=0}^{p^s-1} b_n X^n\,, \quad s =0,1,2,\dots
\]

If the Newton polyhedron of $\L$ contains the origin as its only interior integral point, then for every $s \ge 1$ one has the congruence 
\be{c1}
\frac{f(X)}{f(X^p)} \;\equiv\; \frac{f_s(X)}{f_{s-1}(X^p)} \mod\; p^s \Z_p[[X]]\,,
\ee
or, equivalently, for every $s \ge 1$
\be{c2}
f_{s+1}(X)f_{s-1}(X^p) \;\equiv\; f_s(X)f_{s}(X^p) \mod\; p^s\Z_p[X]\,.
\ee
\end{theorem}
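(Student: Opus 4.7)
The plan is to follow the Frobenius-lift philosophy that underlies Dwork's original congruences. The key algebraic input is the chain of identities
\[
\L(x)^{p^s} \;\equiv\; \L(x^p)^{p^{s-1}} \pmod{p^s \Z_p[x_1^{\pm 1},\dots,x_d^{\pm 1}]}, \qquad s \ge 1,
\]
which I would prove by induction on $s$: the base case is the Fermat-type congruence $\L(x)^p \equiv \L(x^p) \pmod p$, and the inductive step uses the elementary fact that $u \equiv v \pmod{p^k}$ in $\Z_p[x_1^{\pm 1},\dots,x_d^{\pm 1}]$ implies $u^p \equiv v^p \pmod{p^{k+1}}$.

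Next I would introduce auxiliary \emph{twisted} constant terms
\[
c_{n,m} \= \bigl[\L(x)^n \L(x^p)^{-m}\bigr]_0,
\]
interpreted formally via a $p$-adic expansion of $\L(x^p)^{-m}$ about the origin (made rigorous, for example, by working with $\L(x)^n \L(x^p)^{M-m}$ for $M$ large and dividing out by $b_{M-m}$, after verifying that $b_{M-m}$ is a $p$-adic unit). The ratio $f(X)/f(X^p)$ decomposes naturally in terms of the $c_{n,m}$, and the Frobenius-lift identity above translates into congruences among them modulo $p^s$. Comparing $f(X)/f(X^p)$ with its truncation $f_s(X)/f_{s-1}(X^p)$ then reduces \eqref{c1} to a finite collection of such congruences; the equivalence of \eqref{c1} and \eqref{c2} is routine, since $f(X) \in 1+X\Z_p[[X]]$ is invertible in $\Z_p[[X]]$, and follows by a telescoping comparison of the statements for $s$ and $s+1$.

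The main obstacle---and the place where the Newton polyhedron hypothesis is essential---is controlling the error terms produced because the Frobenius lift is only an identity modulo $p^s$. These errors involve constant-term quantities $[\L(x)^n]_\alpha$ with $\alpha \ne 0$, and one needs $p$-adic valuation estimates of the shape $v_p\bigl([\L^n]_\alpha\bigr) \ge v_p(n) + (\text{boundary excess})$ in order to annihilate them. The hypothesis that the origin is the unique interior integral point of $\Newt(\L)$ forces every non-zero $\alpha \in n\,\Newt(\L) \cap \Z^d$ to lie on a proper face of $n\,\Newt(\L)$; any decomposition $\alpha = \alpha_1 + \cdots + \alpha_n$ with each $\alpha_i$ in $\mathrm{supp}(\L)$ must then have all its summands confined to the corresponding face of $\Newt(\L)$, giving strong combinatorial constraints on the contributing multi-indices. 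Translating these face-theoretic constraints into the required $p$-adic valuation bound is the technical heart of the argument; with that bound in hand, the Frobenius-lift identity and the twisted-constant-term bookkeeping combine to yield \eqref{c1}.
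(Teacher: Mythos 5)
Your outline is best read as a research plan rather than a proof, and it departs substantially from the paper's argument. The paper's engine is a combinatorial decomposition of $\L^n$ into products of \emph{ghost terms} $R_s(\L)(x) = \L(x)^{p^s} - \L(x^p)^{p^{s-1}}$ indexed by tuples $m$ with $0 \le m_i \le i$, together with the elementary lemma that an \emph{indecomposable} tuple of length $k$ satisfies $|m| \ge k-1$. This produces a sequence $c_n = [I_\L^n]_0 \in p^{\ell(n)-1}\Z_p$ with the exact multiplicative identity $b_n = \sum_{n^{(1)}*\cdots*n^{(r)}=n} c_{n^{(1)}}\cdots c_{n^{(r)}}$ (Lemma~\ref{lem}), and the theorem is then deduced by a bijection between ``good'' pairs of base-$p$ partitions, the remaining pairs being annihilated modulo $p^s$ by the divisibility of the $c$'s. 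The Newton-polyhedron hypothesis enters exactly once, and cleanly: it forces $\Newt(I_\L^{n^{(1)}})$ to avoid $p^{\ell(n^{(1)})}\Z^d \setminus \{0\}$, so constant terms of staggered products factor. None of these structures appears in your sketch.

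Beyond the difference in route, there are genuine gaps. First, the ``twisted constant term'' $c_{n,m} = [\L(x)^n\L(x^p)^{-m}]_0$ is not well-defined: $\L(x^p)^{-m}$ is not a Laurent polynomial, and your proposed regularization $[\L(x)^n\L(x^p)^{M-m}]_0 / b_{M-m}$ does not obviously converge as $M\to\infty$ and presupposes that $b_{M-m}$ is a unit, which is false in general (which $b_k$ are units is itself controlled by the mod-$p$ case of the congruence you are trying to prove). Second, the valuation estimate $v_p([\L^n]_\alpha) \ge v_p(n) + (\text{boundary excess})$ that you flag as ``the technical heart'' is neither defined precisely nor proved, and on its naive reading it fails: for $\L = x + x^{-1}$, whose Newton interval $[-1,1]$ has the origin as its only interior lattice point, $[\L^n]_n = 1$ for every $n$, so $v_p([\L^n]_n) = 0$ independently of $v_p(n)$. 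Your face-confinement observation (that a decomposition of a boundary lattice point of $n\,\Newt(\L)$ into $n$ points of $\Newt(\L)$ must land on the corresponding face) is true, but by itself carries no $p$-adic information; converting it into a usable divisibility statement is precisely what has not been done. Finally, the ``bookkeeping'' that is supposed to turn the Frobenius-lift congruence plus the $c_{n,m}$ into \eqref{c1} is never carried out, and it is exactly there --- the analogue of the paper's good/bad partition bijection in Section~\ref{sec:proof} --- that the actual content of the theorem lies.
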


\vskip1cm

One can easily see that congruence~\eqref{c1} with $s=1$ is equivalent to the statement that for every $n$
\[
b_n \con b_{n \,{\rm mod}\, p} \; b_{ \lfloor\frac{n}{p}\rfloor} \mod\; p \,,
\]
or if we expand $n$ to the base $p$ as $n = n_0 + n_1 p + \dots + n_r p^r = \overline{n_0 \dots n_r}$ with digits $0 \le n_i \le p-1$ then 
\[
b_{n} \con b_{n_0} \, \dots \, b_{n_r} \mod\; p\,.
\]
In \cite{SvS09} Duco van Straten and Kira Samol gave a generalization modulo higher powers of $p$: under the same assumptions as in Theorem~\ref{Th} one has
\be{dig2}
b_{n + mp^s} \; b_{ \lfloor\frac{n}{p}\rfloor } \equiv b_{n} \; b_{ \lfloor\frac{n}{p}\rfloor + m p^{s-1}} \mod\; p^{s} 
\ee
for all $n,m \ge 0$, $s \ge 1$. We do not know whether it is possible to deduce congruences~\eqref{c1}-\eqref{c2} from~\eqref{dig2}. Our method of proof is independent and actually allows one to get~\eqref{dig2} as a byproduct. In fact the main idea used here appeared as an attempt to give an independent proof of ~\eqref{dig2}, and later we realized that it can also be applied to~\eqref{c1}-\eqref{c2}. 

\vskip1cm

Throughout the paper we assume $p$ to be a fixed prime number. For a natural number $n \in \N$ we denote by $\ell(n)=\lfloor \frac{\log n}{\log p} \rfloor + 1$ the length of the expansion of $n$ to the base $p$, and we assume $\ell(0)=1$. For any tuple of non-negative integers $n^{(1)},\ldots,n^{(r)}$ with $n^{(r)}\neq 0$ we introduce the notation 
\[
n^{(1)}*\cdots*n^{(r)}:=n^{(1)} + n^{(2)} p^{\ell(n^{(1)})} + \cdots+n^{(r)} p^{\ell(n^{(1)})+\cdots+\ell(n^{(r-1)})},
\]
that is the expansion of $n^{(1)}*\cdots*n^{(r)}$ to the base $p$ is the concatenation of the respective expansions of $n^{(1)},\ldots,n^{(r)}$.

The proof of Theorem~\ref{Th} is based on the following 

\begin{lemma}\label{lem} Under the assumptions of Theorem~\ref{Th}, there exists a $\Z_p$-valued sequence $\{ c_n; n \ge 0 \}$ such that for all $n \ge 1$
\be{A1}
b_n \=  \underset{n^{(1)} * \dots * n^{(r)} = n} \sum c_{n^{(1)}} \cdot \ldots \cdot c_{n^{(r)}}\,,
\ee
where the sum runs over all $1 \le r \le \ell(n)$ and all possible partitions of the expansion of $n$ to the base $p$ into $r$ expansions of non-negative integers, and
\be{A2}
c_n \equiv 0 \mod\; p^{\ell(n)-1}\,.  
\ee
\end{lemma}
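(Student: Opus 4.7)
The plan is to define $c_n$ inductively so that \eqref{A1} becomes tautological, and then to extract the divisibility \eqref{A2} from a Dwork-style Frobenius decomposition of $\L^n$ along the base-$p$ digits of $n$. Concretely, I would put $c_0 := 1$ and, for every $n \ge 1$, isolate the trivial $r=1$ term in \eqref{A1}:
$$
c_n \;:=\; b_n \- \sum_{\substack{r \ge 2 \\ n^{(1)} * \cdots * n^{(r)} = n}} c_{n^{(1)}} \cdots c_{n^{(r)}}.
$$
Every decomposition on the right with $r \ge 2$ involves blocks of length $\ell(n^{(i)}) < \ell(n)$, so this together with $c_0 = 1$ defines $\{c_n\}_{n \ge 0}$ uniquely by induction on $\ell(n)$, and $c_n \in \Z_p$ because every $b_n \in \Z_p$.

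With $c_n$ so defined, \eqref{A1} holds by construction, so the content of the lemma is the divisibility \eqref{A2}. I would prove it by strong induction on $\ell := \ell(n)$: the base case $\ell = 1$ is vacuous ($c_n = b_n \in \Z_p$), and assuming \eqref{A2} for all $m$ with $\ell(m) < \ell$, the defining recursion shows that $c_n \equiv 0 \mod p^{\ell-1}$ is equivalent to
$$
b_n \;\equiv\; \sum_{\substack{r \ge 2 \\ n^{(1)} * \cdots * n^{(r)} = n}} c_{n^{(1)}} \cdots c_{n^{(r)}} \mod\; p^{\ell-1}. \qquad (\ast)
$$
The lemma thus reduces to establishing $(\ast)$ for every $n$ of length $\ell \ge 2$.

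To obtain $(\ast)$ I would use the Dwork–Frobenius congruence $\L(x)^{p^k} \equiv \L(x^p)^{p^{k-1}} \mod p^k$, itself derived by iterating the mod-$p$ Frobenius $\L^p \equiv \L(x^p) \mod p$. Applied to each splitting $n = n' + p^k n''$ with $n' < p^k$ it yields
$$
\L(x)^n \= \L(x)^{n'} \bigl( \L(x)^{p^k} \bigr)^{n''} \;\equiv\; \L(x)^{n'}\, \L(x^p)^{p^{k-1} n''} \mod\; p^k,
$$
and taking constant terms expresses $b_n$, modulo $p^k$, as a convolution over lattice points $\alpha \in p\Z^d$. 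Here the Newton polyhedron hypothesis intervenes: the assumption that $0$ is the only interior integral point of $\Newt(\L)$ forces every nonzero $\alpha$ supporting $\L(x)^{n'}$ to lie on the boundary of $n'\cdot\Newt(\L)$, which endows the corresponding convolution summand with additional $p$-adic divisibility. Iterating the splitting recursively on the remaining digits of $n''$, and invoking the inductive hypothesis on the shorter $c_{n^{(i)}}$, one should reconstruct the right-hand side of $(\ast)$ block by block, with every residual error already in $p^{\ell-1} \Z_p$.

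The hard part will be the precise $p$-adic accounting: I must show that the boundary-lattice-point contributions at every Dwork step in the iteration either carry enough extra factors of $p$ to land in $p^{\ell-1} \Z_p$ or assemble exactly into one of the products $c_{n^{(1)}} \cdots c_{n^{(r)}}$ of a nontrivial decomposition. The combinatorial formalism of the $*$-concatenation and of the length $\ell(\cdot)$ seems designed to make this matching transparent, encoding the tree of iterated Frobenius splittings so that each node contributes one factor $c_{n^{(i)}}$; but verifying the matching simultaneously for all partitions, for all digit patterns, and to the required precision is the technical heart of the argument.
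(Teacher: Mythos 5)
Your proposal correctly identifies that \eqref{A1} can be used to define $c_n$ recursively (and indeed the recursion $c_0 = 1$, $c_n = b_n - \sum_{r\ge 2, n^{(1)}*\cdots*n^{(r)}=n} c_{n^{(1)}}\cdots c_{n^{(r)}}$ pins down the same sequence the paper produces), so that the entire content of the lemma is the divisibility \eqref{A2}. But you do not actually prove \eqref{A2}; you explicitly defer ``the precise $p$-adic accounting'' to future work. That deferred step is not a routine verification — it is the whole lemma — so the proposal has a genuine gap.

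Moreover, the heuristic you offer for why \eqref{A2} should hold misidentifies where the hypotheses enter. You assert that ``the assumption that $0$ is the only interior integral point of $\Newt(\L)$ forces every nonzero $\alpha$ supporting $\L(x)^{n'}$ to lie on the boundary of $n'\Newt(\L)$, which endows the corresponding convolution summand with additional $p$-adic divisibility.'' The Newton polyhedron hypothesis gives no $p$-adic divisibility at all; that is not its role. In the paper's argument all $p$-adic divisibility comes from the ghost terms $R_s(\L) = \L(x)^{p^s} - \L(x^p)^{p^{s-1}}$, which satisfy $R_s(\L)\equiv 0 \bmod p^s$, combined with the purely combinatorial Proposition~\ref{indec_lemma}: an indecomposable tuple $m\in S_k$ has $|m|\ge k-1$. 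Expanding $\L^n$ as a sum of products of ghost terms indexed by tuples $m\in S_{\ell(n)}$ and regrouping by the unique factorization of each $m$ into indecomposables produces explicit Laurent polynomials $I_\L^{n^{(i)}}$ whose coefficients are already divisible by $p^{\ell(n^{(i)})-1}$. The Newton polyhedron hypothesis is then used for an entirely different purpose: because $\Newt(I_\L^{n^{(1)}})\subset n^{(1)}\Newt(\L)$ with $n^{(1)} < p^{\ell(n^{(1)})}$, and $0$ is the only interior lattice point, the set $\Newt(I_\L^{n^{(1)}})$ meets the sublattice $p^{\ell(n^{(1)})}\Z^d$ only at the origin, so the constant term of the product $\prod_i I_\L^{n^{(i)}}(x^{p^{\cdots}})$ \emph{factors} as $\prod_i [I_\L^{n^{(i)}}]_0$. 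Setting $c_n=[I_\L^n]_0$ then gives both \eqref{A1} and \eqref{A2} at once. Without the ghost-term/indecomposable-tuple mechanism — or some substitute for it — the inductive $p$-adic bookkeeping you gesture at will not close, because the recursion that defines $c_n$ involves signs and cancellations that by themselves yield no valuation bound.
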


\vskip1cm

The paper is organized as follows. We construct the sequence $\{ c_n ; n \ge 0 \}$ in Sections~\ref{sec:ghost}-\ref{sec:onept}. Section~\ref{sec:proof} can be read independently of the previous three, we deduce Theorem~\ref{Th} from Lemma~\ref{lem} in there. 

\vskip1cm

In the remainder of this section we would like to suggest an application of the congruences stated in Theorem~\ref{Th}. Basically, we extract the following lemma from~\cite{Dw69} (see Theorem 3). But since our setup is simpler and assumptions look slightly different, we give a proof nevertheless. Let us fix the following notation:
\[\bal
& |\cdot|_p \quad \text{ denotes the $p$-adic norm, chosen so that } |p|_p \= p^{-1} \\
& \Omega \quad\= \text{ completion of the algebraic closure of } \Q_p \\
& \O \quad\= \text{ring of integers of } \Omega \= \{ z \in \Omega \;:\; |z|_p \le 1 \} \\
& \B \quad\= \text{ideal of non units in } \O \= \{ z \in \Omega \;:\; |z|_p < 1 \} \\
\eal\]

\begin{lemma}[Dwork]\label{dw_lemma} Let a $\Z_p$-valued sequence $\{ b_n; n \ge 0 \}$ be such that $b_0$ is a unit and congruence~\eqref{c1} holds true for every $s \ge 1$. Consider the region 
\[
\D \= \bigl\{ z \in \O \;:\; |f_1(z)|_p \= 1 \bigr\}\,.
\]  
Then 
\begin{itemize}
\item[(i)] $\D$ contains $\B$, and if $z \in \D$ then $z^p \in \D$;
\item[(ii)] for every $s \ge 0$ one has $|f_s(z)|_p=1$ when $z \in \D$;
\item[(iii)] the sequence of rational functions $f_s(z)/f_{s-1}(z^p)$ converges uniformly in $\D$, and if we denote the limiting analytic function by $\omega(z) = \underset{s \to \infty}\lim f_s(z)/f_{s-1}(z^p)$ then for all $s \ge 1$
\[
\underset{z \in \D}\sup \; \Bigl| \omega(z) \- \frac{f_s(z)}{f_{s-1}(z^p)} \Bigr|_p \;\le\; \frac1{p^s}\,;
\]
\item[(iv)] $f(z)/f(z^p)$, which is a power series with integral coefficients and hence an analytic function on $\mathcal B$, is the restriction of $\omega(z)$ to $\mathcal B$.
\end{itemize}
\end{lemma}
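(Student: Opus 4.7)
The plan is to handle the four assertions in order, drawing on the polynomial congruence~\eqref{c2} together with the identity $g(X)^p \equiv g(X^p) \pmod{p}$ in $\Z_p[X]$ (a consequence of the freshman's dream and Fermat's little theorem applied coefficientwise).

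First I would dispatch (i). The inclusion $\B \subset \D$ is immediate: for $|z|_p < 1$ one has $|f_1(z) - b_0|_p \le |z|_p < 1$, while $|b_0|_p = 1$ forces $|f_1(z)|_p = 1$. For the Frobenius-stability $z \in \D \Rightarrow z^p \in \D$, apply the mod-$p$ identity to $g = f_1$: one obtains $|f_1(z^p) - f_1(z)^p|_p \le p^{-1}$, and since $|f_1(z)^p|_p = 1$ this gives $|f_1(z^p)|_p = 1$.

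The heart of the argument is (ii). Reducing~\eqref{c2} modulo $p$ and substituting $\bar f_s(X^p) = \bar f_s(X)^p$ in $\F_p[X]$ turns the congruence into the recursion
\[
\bar f_{s+1}(X)\,\bar f_{s-1}(X)^p \= \bar f_s(X)^{p+1}
\]
in $\F_p[X]$. Starting from $\bar f_0 = b_0 \in \F_p^\times$ and $\bar f_1$, an induction on $s$ then yields $\bar f_s(X) = \alpha_s\, \bar f_1(X)^{(p^s-1)/(p-1)}$ for some $\alpha_s \in \F_p^\times$, so the zero loci of $\bar f_s$ and $\bar f_1$ in the residue field agree. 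Consequently $|f_s(z)|_p = 1$ on $\D$ for every $s \ge 0$.

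Once (i) and (ii) are in hand, parts (iii) and (iv) should be routine $p$-adic estimates. For $z \in \D$, both $f_s(z^p)$ and $f_{s-1}(z^p)$ are units of $\O$ (using (i) to get $z^p \in \D$ and then (ii) applied to $z^p$), so dividing the evaluation of~\eqref{c2} at $z$ by $f_s(z^p)\,f_{s-1}(z^p)$ yields
\[
\Bigl| \frac{f_{s+1}(z)}{f_s(z^p)} - \frac{f_s(z)}{f_{s-1}(z^p)} \Bigr|_p \le p^{-s}
\]
uniformly in $z \in \D$, and a telescoping tail bound gives the estimate in (iii). For (iv), on $\B$ both $f(z)$ and $f(z^p)$ converge, with $|f(z)-f_s(z)|_p$ and $|f(z^p)-f_{s-1}(z^p)|_p$ tending to $0$, so passing to the limit in $f_s(z)/f_{s-1}(z^p)$ identifies $\omega|_\B$ with $f(z)/f(z^p)$. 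The principal obstacle is the $\F_p[X]$ factorization in (ii): once that gives unitality of $f_s(z)$ on all of $\D$, every denominator in (iii)--(iv) behaves and the remaining estimates are essentially automatic.
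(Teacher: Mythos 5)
Your argument is correct. Parts (i), (iii), and (iv) essentially match the paper's: (i) is identical; for (iii) the paper reads the bound $|f_k(z)/f_{k-1}(z^p) - f_s(z)/f_{s-1}(z^p)|_p \le p^{-s}$ directly off $f_k(X)f_{s-1}(X^p) - f_{k-1}(X^p)f_s(X) \in p^s\Z_p[X]$ for $k\ge s$ rather than telescoping consecutive differences, and for (iv) it uses $f(X)/f(X^p) - f_s(X)/f_{s-1}(X^p) \in p^s\Z_p[[X]]$ on $\B$ rather than passing to the limit in numerator and denominator separately --- cosmetic differences. The genuine divergence is in (ii). You reduce \eqref{c2} mod $p$, invoke Frobenius, and solve the resulting recursion $\bar f_{s+1}\bar f_{s-1}^{\,p} = \bar f_s^{\,p+1}$ in closed form, $\bar f_s = \alpha_s \bar f_1^{(p^s-1)/(p-1)}$ with $\alpha_s\in\F_p^\times$. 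The paper instead uses the congruence $f_s(X) \equiv f_1(X)f_{s-1}(X^p) \pmod p$ (up to the unit $b_0$; this follows from \eqref{c1} at level $s=1$ after clearing denominators and truncating below degree $p^s$) and then simply runs the isosceles-triangle induction of (i) once more. Both routes isolate the same key fact --- that $\bar f_s$ vanishes in the residue field of $\O$ only where $\bar f_1$ does --- and both quietly rely on $\bar f_{s-1}\neq 0$: you to cancel it in the integral domain $\F_p[X]$ (worth saying explicitly, together with the remark that the quotient is again a polynomial by the inductive hypothesis), the paper to carry the unit-norm condition forward inductively. Your closed form is a slightly stronger and rather pleasant statement; the paper's version avoids any division and reuses the computation from (i) verbatim.
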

\begin{proof} As $b_0$ is a unit, for $z \in \B$ we have $|f_1(z)|_p = |b_0|_p = 1$ by the isosceles triangle principle for non-Archimedean norms. Since $f_1(X) \in \Z_p[X]$ then $f_1(X)^p-f_1(X^p) \in p \Z_p[X]$ and therefore $|f_1(z)^p-f_1(z^p)|_p \le \frac1p$ for any $z \in \O$. Hence for $z \in \D$ we have $|f_1(z^p)|_p = |f_1(z)^p|= 1$ again by the isosceles triangle principle, so $z^p \in \D$. (ii) follows from the same argument by induction on $s$, since for every $s \ge 1$ we have $f_{s}(X) - f_1(X)f_{s-1}(X^p) \in p \Z_p[X]$. To prove (iii) we notice that $f_{k}(X)f_{s-1}(X^p) - f_{k-1}(X^p)f_s(X) \in p^s \Z_p[X]$ for any $k \ge s$, which together with (ii) gives
\[
\Bigl| \frac{f_k(z)}{f_{k-1}(z^p)} \- \frac{f_s(z)}{f_{s-1}(z^p)} \Bigr|_p \le \frac1{p^s} \qquad \forall z \in \D \,,
\] 
and we see that this sequence of functions is a Cauchy sequence. To prove (iv) observe that  
\[
\Bigl| \frac{f(z)}{f(z^p)} \- \frac{f_s(z)}{f_{s-1}(z^p)} \Bigr|_p \le \frac1{p^s}
\]
for any $z \in \B$ as $f(X)/f(X^p)-f_s(X)/f_{s-1}(X^p) \in p^s\Z_p[[X]]$.
\end{proof}

\vskip0.5cm

Let us take for example the Laurent polynomial
\[
\L(x_1,x_2) \= \frac{(1+x_1)(1+x_2)(1+x_1+x_2)}{x_1 x_2} \,.
\]
One can show that the sequence of the constant terms of its powers is the Ap\'{e}ry sequence
\[
b_n \= \sum_{k=0}^n \binom{n}{k}^2\binom{n+k}{k}\,.
\]
Conditions of Theorem~\ref{Th} are satisfied for all primes $p$ since coefficients are integral and the Newton polygon of $\Lambda$ has one interior integral point. Normalizations of the smooth fibers of the family  
\[
E_t \;:\; \L(x_1,x_2) \= \frac1{t}
\]
are elliptic curves. We denote by $\overline{E}_t \subset \P^2$ the normalization of $E_t$.  Now fix any prime $p$ and consider the above family over the finite field $\F_p$. Assume that $t \in \F_p^{\times}$ is such that $\overline{E}_t$ is smooth. Let $z_t \in \Z_p$ be the respective Teichm\"{u}ller representative, that is the unique $p$-adic number satisfying $z_t^{p-1}=1$ and $z_t \con t \mod p$. One can show that 
\[
p+1-\# \overline{E}_t(\F_p) \con f_1(t) \mod p \,.
\]  
Hence $f_1(t)$ modulo $p$ is the Hasse invariant for this family, and we have $|f_1(z_t)|_p = 1$ precisely when the curve $\overline{E}_t$ is ordinary. The number $\omega(z_t)$ is then a reciprocal zero of the zeta function of $\overline{E}_t / \F_p$, i.e.
\[
{\mathcal Z}\Bigl( \overline{E}_t / \F_p; X \Bigr) \= \frac{(1-\omega(z_t)X)(1-\frac{p}{\omega(z_t)}X)}{(1-X)(1-pX)}\,.
\]
The reciprocal zero which is a $p$-adic unit is usually called the ``unit root'', which allows to distinguish between the two reciprocal roots in the case of ordinary reduction. We plan to devote another paper to the proof of such ``unit root formulas''. Lemma~\ref{dw_lemma} also shows that to get the first $s$ $p$-adic digits of the unit root it is sufficient to compute $f_s(z_t)/f_{s-1}(z_t)$ modulo $p^s$.

This situation resembles the classical example with the Legendre family due to John Tate and Bernard Dwork (see $\S 8$ in \cite{Katz72}, $\S 5$ in \cite{Dw62}). Theorem~\ref{Th} along with Lemma~\ref{dw_lemma} constitute a step towards proving such ``unit root formulas'' for families of hypersurfaces.

\section{Ghost terms}\label{sec:ghost}

For a Laurent polynomial $\L(x) \in \Z_p[x_1^{\pm1},\dots,x_d^{\pm1}]$ and an integer $m \ge 1$ we write $\L(x^m)$ for $\L(x_1^m,\dots,x_d^m)$. The Newton polyhedron of $\L$ is the convex hull in $\R^d$ of the exponent vectors of the monomials of $\L$. It is denoted by $\Newt(\L)$.

\begin{definition} For a Laurent polynomial $\L(x)$ and an integer $s \ge 1$ the ghost term $R_s(\L)$ is the Laurent polynomial defined by
\[
R_s(\L) (x) \;:=\; \L(x)^{p^s} - \L(x^p)^{p^{s-1}}\,.
\]
In addition we put $R_0(\L):=\L$.
\end{definition}

\begin{proposition} For every integer $s \geq 0$ one has
\begin{itemize}
\item[(i)] $\Lambda(x)^{p^s} \= R_0(\Lambda)(x^{p^s}) + R_1(\Lambda)(x^{p^{s-1}}) + \cdots + R_s(\Lambda)(x)$;
\item[(ii)] $ R_s(\L) \con 0 \mod \; p^s$;
\item[(iii)] $\Newt(R_s(\L)) \subset p^s \, \Newt(\L)$.
\end{itemize}
\end{proposition}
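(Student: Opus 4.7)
The plan is to handle the three items separately, each by a short and essentially independent argument: (i) is a telescoping identity coming directly from the definition of $R_s$, (ii) is a standard lifting-of-exponent lemma, and (iii) is Newton polyhedron bookkeeping.

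For (i), I would rewrite the definition as $\L(x)^{p^s} = \L(x^p)^{p^{s-1}} + R_s(\L)(x)$ and iterate. Substituting $x \mapsto x^p$ and replacing $s$ by $s-1$ gives $\L(x^p)^{p^{s-1}} = \L(x^{p^2})^{p^{s-2}} + R_{s-1}(\L)(x^p)$, and applying this $s$ times telescopes $\L(x)^{p^s}$ into a sum of ghost terms $R_k(\L)(x^{p^{s-k}})$ together with a residual $\L(x^{p^s}) = R_0(\L)(x^{p^s})$, which is exactly the claimed decomposition.

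For (ii), the key input is the standard fact that in a commutative $\Z_p$-algebra, $A \con B \mod p^k$ implies $A^p \con B^p \mod p^{k+1}$: writing $A = B + p^k C$ and expanding binomially, each cross term $\binom{p}{j} B^{p-j} (p^k C)^j$ has $p$-adic valuation at least $k+1$ for $1 \le j \le p$. Starting from the Frobenius congruence $\L(x)^p \con \L(x^p) \mod p$, which holds by the multinomial theorem together with Fermat's little theorem applied coefficient-wise to $\L$, an induction on $s$ yields $\L(x)^{p^s} \con \L(x^p)^{p^{s-1}} \mod p^s$, i.e.\ $R_s(\L) \con 0 \mod p^s$.

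For (iii), I would use that Newton polyhedra are Minkowski-additive under multiplication, so $\Newt(\L^{p^s}) = p^s \Newt(\L)$; and that substituting $x \mapsto x^p$ scales exponents by $p$, giving $\Newt(\L(x^p)^{p^{s-1}}) = p^{s-1} \cdot p \, \Newt(\L) = p^s \Newt(\L)$ as well. Since the Newton polyhedron of a difference is contained in the convex hull of the union of the individual polyhedra, $\Newt(R_s(\L)) \subset p^s \Newt(\L)$ follows. No substantial obstacle is anticipated: the lifting lemma in (ii) is the only non-trivial tool, and it is standard $p$-adic fare; the other two parts are formal manipulations.
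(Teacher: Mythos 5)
Your proof is correct and follows essentially the same route as the paper: telescope the defining identity for (i), use the lifting-the-exponent fact $A \equiv B \pmod{p^k} \Rightarrow A^p \equiv B^p \pmod{p^{k+1}}$ together with the Frobenius congruence for (ii), and elementary Newton-polyhedron containments for (iii). The only cosmetic difference is that you spell out the binomial proof of the lifting lemma (which the paper takes as known) and assert Minkowski equality $\Newt(\L^{p^s}) = p^s\Newt(\L)$ where the paper, more cautiously, uses only the inclusion $\subset$, which is all that is needed.
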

\begin{proof} Formula (i) easily follows by induction. (ii) is trivial when $s=0$ and clearly $p | \L(x)^p - \L(x^p)$ which proves the statement for $s=1$. To do induction in $s$ we use the fact that in any ring $R$ if $X\equiv Y \pmod{p^s}$ then $X^p\equiv Y^p \pmod{p^{s+1}}$. This proves (ii). (iii) follows from the definition of ghost terms and the following two obvious properties of the Newton polyhedron: for any Laurent polynomial $\Phi$ and any integer $m \ge 1$ one has $\Newt(\Phi(x)^m) \subset m \; \Newt(\Phi(x))$ and $\Phi(x^m) \subset m \; \Newt(\Phi(x))$.  
\end{proof}

Expanding any positive integer $n$ to the base $p$ as $n = n_0 + n_1 p + \dots + n_{\ell(n)-1} p^{\ell(n)-1}$ with digits $0 \leq n_i \leq p-1$ we use (i) in the above proposition to decompose the product
\[
\Lambda^{n} = \Lambda^{n_0} (\Lambda^{n_1})^p \ldots (\Lambda^{n_{\ell(n)-1}})^{p^{\ell(n)-1}}
\]
as the sum of products of ghost terms of the collection of $p$ Laurent polynomials $\Lambda^{a}$, $0 \le a \le p-1$. We obtain that $\Lambda(x)^{n}$ is the sum of the products
\[
R_{m,\L}^n(x) := \prod_{i=0}^{\ell(n)-1} R_{m_i}(\L^{n_i})(x^{p^{i-m_i}}) 
\]
where $m=(m_0,m_1,\ldots,m_{\ell(n)-1})$ runs over the set of all integral tuples of length $\ell(n)$ satisfying $0 \leq m_i \le i$. For such a tuple we denote $|m|=\sum m_i$. Then one has
\[
R_{m,\Lambda}^n(x) \con 0 \mod\; p^{|m|}  
\]
from (ii) in the above proposition, and (iii) gives us
\[
\Newt\Bigl(R_{m,\Lambda}^n \Bigr) \subset n \; \Newt(\Lambda)
\]
respectively.

\section{Indecomposable tuples}\label{sec:indec}
Denote the set of all tuples $(m_0,m_1,\ldots,m_{k-1}) \in \Z^k$ satisfying $0\leq m_i \leq i$ by $S_{k}$. Put $S=\cup_{k>0} S_k$. For $m' \in S_k$, $m'' \in S_l$ we denote $m' * m'' = (m'_0,\ldots,m'_{k-1},m''_0,\ldots,m''_{l-1}) \in S_{k+l}$.

\begin{definition}
A tuple $m\in S$ is called indecomposable if it cannot be presented as $m' * m''$ for $m', m''\in S$. The set of all indecomposable tuples of length $k$ is denoted as $S_k^{ind}$ and we put $S^{ind}=\cup_{k>0} S_k^{ind}$.
\end{definition}

Recall the notation $|m|=\sum m_i$ for a tuple $m \in S$. We have
\begin{proposition}\label{indec_lemma}
If $m\in S_k^{ind}$, then $|m|\geq k-1$.
\end{proposition}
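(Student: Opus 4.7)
The plan is to translate indecomposability into the assertion that a certain family of integer intervals covers $\{1, 2, \ldots, k-1\}$, after which the bound on $|m|$ is essentially immediate.

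First I would unwind the definition. Given $m = (m_0, m_1, \ldots, m_{k-1}) \in S_k$, any decomposition $m = m' * m''$ with $m' \in S_j$ and $m'' \in S_{k-j}$ (for some $1 \le j \le k-1$) must have $m' = (m_0, \ldots, m_{j-1})$ and $m'' = (m_j, \ldots, m_{k-1})$. The first piece automatically satisfies the constraints of $S_j$, so the decomposability at position $j$ is equivalent to $m'' \in S_{k-j}$, which amounts to $m_\ell \le \ell - j$ for every $\ell \in \{j, j+1, \ldots, k-1\}$. Hence $m$ is indecomposable precisely when, for each $j \in \{1, \ldots, k-1\}$, there exists some $\ell \ge j$ with $m_\ell > \ell - j$, equivalently with $\ell - m_\ell < j \le \ell$.

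Next I would encode this combinatorially: to each position $\ell \in \{1, \ldots, k-1\}$ associate the integer interval
\[
I_\ell \;=\; \{\ell - m_\ell + 1,\; \ell - m_\ell + 2,\; \ldots,\; \ell\} \;\subset\; \Z,
\]
whose cardinality is exactly $m_\ell$ (and which is empty when $m_\ell = 0$). The condition above then reads: a given $j$ is blocked from being a split point by position $\ell$ iff $j \in I_\ell$. Thus indecomposability of $m$ is equivalent to the statement that the intervals $I_1, \ldots, I_{k-1}$ together cover the set $\{1, 2, \ldots, k-1\}$.

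With that reformulation in hand the conclusion is a one-liner. Since the constraint $m_0 \le 0$ forces $m_0 = 0$, one has
\[
|m| \;=\; \sum_{\ell=1}^{k-1} m_\ell \;=\; \sum_{\ell=1}^{k-1} |I_\ell|,
\]
and any family of sets covering $\{1, \ldots, k-1\}$ has cardinalities summing to at least $k-1$. I do not foresee a substantive obstacle: the only delicate point is getting the correspondence between admissible split positions and interval membership the right way around, so the argument reduces to careful index bookkeeping.
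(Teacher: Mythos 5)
Your argument is correct and is essentially the same as the paper's: the paper observes that for each $i\in\{1,\ldots,k-1\}$ there is a $j$ with $j-m_j<i\le j$, and then counts incidences — exactly your interval-covering reformulation, just stated without naming the intervals. Your write-up is a more explicit rendering of the same bijection-free double count.
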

\begin{proof}
If $m$ is indecomposable then for each $i\in\{1,\ldots,k-1\}$ there exists $j\geq i$ such that $m_j>j-i$, i.e. $j\geq i>j-m_j$. The number of such $i$ for a given $j$ is $m_j$. The total number of $i$ is $k-1$, therefore the sum of $m_j$ is at least $k-1$.
\end{proof}

For a Laurent polynomial $\Lambda$, integer $n  \ge 1$ and tuple $m \in S_{\ell(n)}$ we defined in the previous section the product of ghost terms $R_{m,\Lambda}^n$, so that $\L^n = \sum_{m \in S_{\ell(n)}} R_{m,\Lambda}^n$. Now we introduce Laurent polynomials
\[
I_\Lambda^n := \sum_{m\in S_{\ell(n)}^{ind}} R_{m, \Lambda}^n\,.
\]
We summarize their properties in the following

\begin{proposition}\label{indec_prop} For every integer $n  \ge 1$ one has
\begin{itemize} 
\item[(i)] $\L(x)^n = \sum \limits_{n=n^{(1)}*\cdots *n^{(r)}} I_\Lambda^{n^{(1)}}(x) \; I_\Lambda^{n^{(2)}}(x^{p^{\ell(n^{(1)})}}) \; \cdots \; I_\Lambda^{n^{(r)}}(x^{p^{\ell(n^{(1)})+\ell(n^{(2)})+\cdots+\ell(n^{(r-1)})}})$ where the sum runs over all $1 \le r \le \ell(n)$ and all possible partitions of the expansion of $n$ to the base $p$ into $r$ expansions of non-negative integers;
\item[(ii)] $I_\Lambda^n \con 0 \mod \; p^{\ell(n)-1}$;
\item[(iii)] $\Newt(I_\Lambda^n) \subset n \, \Newt(\L)$.
\end{itemize}
\end{proposition}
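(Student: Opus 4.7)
Parts (ii) and (iii) are immediate from Section~\ref{sec:ghost}. For (ii), every $m \in S_{\ell(n)}^{ind}$ satisfies $|m|\geq\ell(n)-1$ by Proposition~\ref{indec_lemma}, while $R_{m,\L}^n \equiv 0 \pmod{p^{|m|}}$, so every summand of $I_\L^n$ is divisible by $p^{\ell(n)-1}$. For (iii), each $R_{m,\L}^n$ already has Newton polyhedron inside $n\,\Newt(\L)$, hence so does the sum.

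The substance is (i). My plan is to start from $\L^n = \sum_{m \in S_{\ell(n)}} R_{m,\L}^n$ (end of Section~\ref{sec:ghost}) and group tuples by their unique decomposition into indecomposable factors $m = m^{(1)} \ast \cdots \ast m^{(r)}$, with $m^{(j)} \in S_{k_j}^{ind}$ and $k_1+\cdots+k_r = \ell(n)$. Uniqueness of such a decomposition is an easy induction: among all valid splits $m = m' \ast m''$ the shortest prefix $m'$ is automatically indecomposable, since any further splitting $m' = u \ast v$ would yield a still shorter valid split $m = u \ast (v \ast m'')$ (one checks directly that $v \ast m'' \in S$). The key computation is a block-by-block factorization of the ghost-term product: writing $K_{j-1} := k_1+\cdots+k_{j-1}$ and substituting $y = x^{p^{K_{j-1}}}$ in each block of the defining product $\prod_i R_{m_i}(\L^{n_i})(x^{p^{i-m_i}})$ gives
\[
R_{m,\L}^n(x) \= \prod_{j=1}^r R_{m^{(j)},\L}^{n^{(j)}}\bigl(x^{p^{K_{j-1}}}\bigr),
\]
where $n^{(j)}$ is the integer encoded by the $j$-th block of digits of $n$. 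Summing over $(m^{(j)}) \in \prod_j S_{k_j}^{ind}$ of a fixed decomposition type then yields $\prod_j I_\L^{n^{(j)}}(x^{p^{K_{j-1}}})$, matching the right-hand side in (i).

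The main obstacle is that this factorization tacitly assumes $\ell(n^{(j)}) = k_j$, which may fail when the top digit of a block is zero. The resolution is a short indecomposability lemma: if $m^{(j)} \in S_{k_j}^{ind}$ with $k_j > 1$, then $m^{(j)}_{k_j-1} > 0$, for otherwise one could write $m^{(j)} = (m^{(j)}_0,\ldots,m^{(j)}_{k_j-2}) \ast (0)$, contradicting indecomposability. Hence whenever a block has $k_j > 1$ and zero top digit $n_{K_j-1}=0$, the factor of the ghost product at position $i=K_j-1$ equals $R_{m_{K_j-1}}(\L^0)=R_{m_{K_j-1}}(1)=0$ (using $m_{K_j-1}=m^{(j)}_{k_j-1}\geq 1$), killing the whole summand. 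Only those partitions where every block is either a single zero or ends in a nonzero digit survive, and for these $\ell(n^{(j)})=k_j$ automatically; the remaining case $n^{(j)}=0$ with $k_j=1$ is consistent with the tautology $I_\L^0 = R_{(0),\L}^0 = 1$. Collecting these valid partitions yields exactly the formula in (i).
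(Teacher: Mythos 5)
Your proof is correct and takes essentially the same route as the paper: start from $\L^n = \sum_{m\in S_{\ell(n)}} R_{m,\L}^n$, group the tuples by their unique factorization into indecomposable blocks, observe that mismatched block lengths ($\ell(n^{(j)}) < k_j$) force a factor $R_s(1)=0$, and conclude. You supply one small detail the paper leaves implicit — a one-line argument that an indecomposable tuple of length $>1$ must have nonzero last entry — but the overall structure and the key vanishing observation are identical.
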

\begin{proof} 
We start with the formula $\L^n = \sum_{m \in S_{\ell(n)}} R_{m,\Lambda}^n$ of the previous section. Each tuple $m$ can be uniquely represented as a concatenation of indecomposable ones, $m=m^{(1)}*\cdots*m^{(r)}$ and we write $n$ in the form 
\[
n = n^{(1)} + p^{\ell(m^{(1)})} n^{(2)} + \ldots +p^{\ell(m^{(1)})+\ell(m^{(2)})+\cdots+\ell(m^{(r-1)})} n^{(r)}
\]
with $\ell(n^{(i)})\leq\ell(m^{(i)})$. Whenever $\ell(n^{(i)})<\ell(m^{(i)})$ the corresponding summand $R_{m,\L}^n(x)$ vanishes because in this case $\ell(m^{(i)})\geq 2$, so the last element of $m^{(i)}$ is not zero and the product in the definition of $R_{m,\L}^n(x)$ contains $R_{s}(\Lambda^0)=R_{s}(1)=0$ for $s>0$. Therefore we can assume $\ell(n^{(i)})=\ell(m^{(i)})$. In this case $n=n^{(1)}*\cdots*n^{(r)}$, the corresponding summand is written as 
\[
R_{m,\L}^n(x) \= R_{m^{(1)},\L}^{n^{(1)}}(x) R_{m^{(2)},\L}^{n^{(2)}}(x^{p^{\ell(n^{(1)})}}) \cdots R_{m^{(r)},\L}^{n^{(r)}}(x^{p^{\ell(n^{(1)})+\ell(n^{(2)})+\cdots+\ell(n^{(r-1)})}}),
\]
and (i) follows. (ii) follows from Proposition~\ref{indec_lemma} since $R_{m,\Lambda}^n(x) \con 0 \mod\; p^{|m|}$, and (iii) is due to the fact that $\Newt\Bigl(R_{m,\Lambda}^n \Bigr) \subset n \; \Newt(\Lambda)$.
\end{proof}

\section{The case of one interior point}\label{sec:onept}

In this section we proceed to compute free terms of powers of $\Lambda(x)=\Lambda(x_1,\ldots,x_d)$. We will work under the assumption that the origin is the only interior integral point of the Newton polyhedron of $\L$.

\begin{proposition} If $0 = (0,\ldots,0)$ is the only interior integral point of $\Newt(\L)$, then for any $r \ge 1$ and non-negative integers $n^{(1)},\dots,n^{(r)}$ one has 
\[
\Bigl[ I_\Lambda^{n^{(1)}}(x) \; I_\Lambda^{n^{(2)}}(x^{p^{\ell(n^{(1)})}}) \; \cdots \; I_\Lambda^{n^{(r)}}(x^{p^{\ell(n^{(1)})+\ell(n^{(2)})+\cdots+\ell(n^{(r-1)})}}) \Bigr]_0 = \prod_{i=1}^r \Bigl[ I_\Lambda^{n^{(i)}} \Bigr]_0.
\]
\end{proposition}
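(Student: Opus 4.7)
The strategy is to expand the product, identify which exponent tuples contribute to the constant term, and show that only the trivial tuple survives.

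Write $I_\L^{n^{(i)}}(x) = \sum_{\alpha \in \Z^d} c_\alpha^{(i)}\, x^\alpha$; by Proposition~\ref{indec_prop}(iii) the support of $I_\L^{n^{(i)}}$ lies in $n^{(i)}\,\Newt(\L) \cap \Z^d$. Expanding the product and collecting coefficients,
\[
\Bigl[\, I_\L^{n^{(1)}}(x)\,I_\L^{n^{(2)}}(x^{p^{e_2}})\cdots I_\L^{n^{(r)}}(x^{p^{e_r}})\, \Bigr]_0 \;=\; \sum_{(\alpha_1,\dots,\alpha_r)} c_{\alpha_1}^{(1)}\cdots c_{\alpha_r}^{(r)},
\]
where $e_i := \ell(n^{(1)}) + \cdots + \ell(n^{(i-1)})$ (so $e_1 = 0$) and the sum runs over integer tuples with $\alpha_i \in n^{(i)}\,\Newt(\L)$ subject to the constraint $\sum_{i=1}^{r} p^{e_i}\,\alpha_i = 0$. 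The right-hand side of the claimed identity is precisely the contribution of the trivial tuple $\alpha_1 = \cdots = \alpha_r = 0$, so the proposition reduces to showing this is the \emph{only} solution of the constraint.

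I will establish $\alpha_k = 0$ iteratively for $k = 1, 2, \ldots, r$. Suppose $\alpha_1 = \cdots = \alpha_{k-1} = 0$. If $k < r$, dividing $\sum_{i \ge k} p^{e_i} \alpha_i = 0$ by $p^{e_k}$ and reducing modulo $p^{\,e_{k+1}-e_k} = p^{\ell(n^{(k)})}$ gives $\alpha_k \equiv 0 \pmod{p^{\ell(n^{(k)})}}$. Writing $\alpha_k = p^{\ell(n^{(k)})}\alpha_k'$ with $\alpha_k' \in \Z^d$, the containment $\alpha_k \in n^{(k)}\,\Newt(\L)$ becomes $\alpha_k' \in \lambda_k\,\Newt(\L)$ with $\lambda_k := n^{(k)}/p^{\ell(n^{(k)})} < 1$. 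The case $k = r$ is immediate: $p^{e_r}\alpha_r = 0$ forces $\alpha_r = 0$.

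The crucial step is then to deduce $\alpha_k' = 0$, for which the hypothesis on $\Newt(\L)$ is essential. The key convex-geometric observation is: if $P \subset \R^d$ is convex with $0$ in its interior, then $\lambda P \subset \mathrm{int}(P)$ for every $0 \le \lambda < 1$, because each point $\lambda x$ is the nontrivial convex combination $\lambda x + (1-\lambda)\cdot 0$ of $x \in P$ with the interior point $0$. Applying this to $P = \Newt(\L)$ and $\lambda = \lambda_k$ places $\alpha_k'$ in $\mathrm{int}(\Newt(\L)) \cap \Z^d$, which by assumption equals $\{0\}$; hence $\alpha_k' = 0$ and so $\alpha_k = 0$. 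Iterating completes the proof. The only substantive input is this convex-geometric fact combined with the interior-integer-point hypothesis; everything else is routine exponent bookkeeping.
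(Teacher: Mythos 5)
Your proof is correct and takes essentially the same route as the paper's. The iterative elimination of $\alpha_k$ is the paper's induction on $r$ unpacked step by step, and the convex-geometric fact you make explicit ($\lambda P \subset \mathrm{int}(P)$ for $0 \le \lambda < 1$ when $0 \in \mathrm{int}(P)$) is precisely what the paper uses implicitly when it asserts that $\Newt(I_\Lambda^{n^{(1)}}) \subset n^{(1)}\Newt(\Lambda)$ contains no nonzero point of the lattice $p^{\ell(n^{(1)})}\Z^d$.
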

\begin{proof}
Since $\Newt(I_\Lambda^{n^{(1)}}) \subset n^{(1)} \; \Newt(\L)$ and $n^{(1)} < p^{\ell(n^{(1)})}$ we see that $N(I_\Lambda^{n^{(1)}})$ does not contain points of the lattice $p^{\ell(n^{(1)})} \Z^d$ other then $0$. Therefore the only contribution to the constant term on the left comes from the product
\begin{multline*}
\Bigl[ I_\Lambda^{n^{(1)}}(x) \; I_\Lambda^{n^{(2)}}(x^{p^{\ell(n^{(1)})}}) \; \cdots \; I_\Lambda^{n^{(r)}}(x^{p^{\ell(n^{(1)})+\ell(n^{(2)})+\cdots+\ell(n^{(r-1)})}})  \Bigr]_0\\
 = \Bigl[  I_\Lambda^{n^{(1)}}(x) \Bigr]_0 \Bigl[ I_\Lambda^{n^{(2)}}(x^{p^{\ell(n^{(1)})}}) \; \cdots \; I_\Lambda^{n^{(r)}}(x^{p^{\ell(n^{(1)})+\ell(n^{(2)})+\cdots+\ell(n^{(r-1)})}}) \Bigr]_0\,.
\end{multline*}
Thus by induction on $r$ we prove the statement.
\end{proof}

Together with Proposition~\ref{indec_prop} (i) this implies

\begin{corollary} $\Bigl[ \L^n \Bigr]_0 \= \sum \limits_{n=n^{(1)}*\cdots *n^{(r)}} \prod \limits_{i=1}^r \Bigl[ I_\Lambda^{n^{(i)}} \Bigr]_0$.
\end{corollary}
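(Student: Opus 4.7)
The plan is to apply the $\Z_p$-linear constant-term functional $[\,\cdot\,]_0$ to both sides of the decomposition established in Proposition~\ref{indec_prop}(i). On the left this yields $[\L^n]_0$, which is precisely what we want to compute. On the right, linearity of $[\,\cdot\,]_0$ turns the identity into a sum, indexed by partitions $n = n^{(1)} * \cdots * n^{(r)}$ of the base-$p$ expansion of $n$, of the constant terms of the products
\[
I_\Lambda^{n^{(1)}}(x) \; I_\Lambda^{n^{(2)}}(x^{p^{\ell(n^{(1)})}}) \; \cdots \; I_\Lambda^{n^{(r)}}\bigl(x^{p^{\ell(n^{(1)})+\cdots+\ell(n^{(r-1)})}}\bigr).
\]

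For each such summand, the preceding proposition (the one immediately before the corollary) shows that the constant term factors as $\prod_{i=1}^{r} \bigl[I_\Lambda^{n^{(i)}}\bigr]_0$. Substituting this factorization term by term into the expanded right-hand side gives exactly the asserted formula.

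There is essentially no obstacle here: linearity of $[\,\cdot\,]_0$ is automatic, and the non-trivial step, namely the factorization of the constant term of the shifted product, has already been handled. Recall that this factorization rested on the Newton-polyhedron containment $\Newt(I_\Lambda^{n^{(i)}}) \subset n^{(i)}\Newt(\L)$ from Proposition~\ref{indec_prop}(iii) together with the hypothesis that the origin is the only interior integral point of $\Newt(\L)$; under the scaling $x \mapsto x^{p^{\ell(n^{(1)})+\cdots+\ell(n^{(i-1)})}}$, the support of each factor meets the lattice in its own variables only at $0$, so no mixed monomials can contribute a further constant term. Hence the corollary is a formal two-line combination of Proposition~\ref{indec_prop}(i) with the preceding proposition, and no additional argument is required.
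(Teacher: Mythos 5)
Your argument is exactly the one the paper intends: apply the $\Z_p$-linear functional $[\,\cdot\,]_0$ to Proposition~\ref{indec_prop}~(i) and then invoke the preceding proposition to factor the constant term of each shifted product. Correct, and the same route as the paper.
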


Now we are in a position to prove Lemma~\ref{lem}.

\begin{proof}[Proof of Lemma~\ref{lem}] Put $c_n = \Bigl[I_{\L}^n \Bigr]_0$. Then~\eqref{A1} is precisely the statement of the latter corollary, and~\eqref{A2} is given by Proposition~\ref{indec_prop} (ii).\end{proof}

\section{Proof of Theorem~\ref{Th}}\label{sec:proof}
\begin{proof}
We will prove~\eqref{c2}. Fixing $N$ and collecting coefficients near $X^N$ on both sides we see that what we need to prove is
\[
\underset{\bal & n + p m = N \\ \ell(n) \le&s+1,\ell(m) \le s-1 \eal} \sum b_n b_m \quad \equiv \underset{\bal & n' + p m' = N \\ &\ell(n'),\ell(m') \le s \eal} \sum b_{n'} b_{m'} \mod p^s
\] 
where the sums run over all pairs $(n,m)$ and $(n',m')$ that satisfy the respective conditions on the left and on the right. The sum of terms on the left with $\ell(n) \le s$ is equal to the sum of terms on the right with $\ell(m')\le s-1$ as the map $(n,m) \mapsto (n',m')=(n,m)$ provides a bijective correspondence. Therefore it remains to show that
\be{one_coeff}
\underset{\bal & n + p m = N \\ \ell(n&)=s+1,\ell(m) \le s-1 \eal} \sum b_n b_m \quad \equiv \underset{\bal & n' + m' p = N \\ \ell(&n') \leq s,\ell(m') = s \eal} \sum b_{n'} b_{m'} \mod p^s\,.
\ee 

Using decomposition~\eqref{A1} a product $b_n b_m$ becomes 
\be{bnbm}
b_n \, b_m \= \sum_{\bal n = n^{(1)}*\dots*n^{(r)} \\ m = m^{(1)}*\dots*m^{(l)} \eal}  c_{n^{(1)}} \cdot \ldots \cdot c_{n^{(r)}} \cdot c_{m^{(1)}} \cdot \ldots \cdot c_{m^{(l)}} \,,
\ee
where we sum over all possible pairs of partitions of $n$ and $m$. 
Let us say that a pair of partitions is \emph{good} if for some $1 \le i < r$ one either has  
\[
\ell(n^{(1)})+\ldots+\ell(n^{(i)}) \= \ell(m^{(1)})+\ldots+\ell(m^{(j)})+1
\]
for some $0 \le j < l$ or 
\[
\ell(n^{(1)})+\ldots+\ell(n^{(i)}) \geq \ell(m)+1 \,.
\]
For a \emph{good} pair of partitions we take the smallest such $i$ and consider the pair $(n',m')$ constructed as follows. If the former of the two options takes place then we put $n'=n^{(1)}*\dots*n^{(i)}*m^{(j+1)}*\dots*m^{(l)}$, $m'=m^{(1)}*\dots*m^{(j)}*n^{(i+1)}*\dots*n^{(r)}$. In the latter case let $i'$ be the index of the last nonzero element of $n^{(1)},\ldots,n^{(i)}$ (they cannot be all zero as otherwise we would have chosen $i=1$). We put $n'=n^{(1)}*\dots*n^{(i')}$, $m'=m*0*\cdots*0*n^{(i+1)}*\dots*n^{(r)}$, where the number of zeroes to be inserted is $\ell(n^{(1)})+\ldots+\ell(n^{(i)}) - \ell(m)-1$. It is not hard to see that $n+pm=N$ implies $n' + p m' = N$, and clearly $\ell(m')=\ell(n)-1 = s$. For $\ell(n')$ we either have $\ell(n')=\ell(m)+1 \le s$ or 
\[
\ell(n') = \ell(n^{(1)})+\ldots+\ell(n^{(i')}) < \ell(n) = s+1\,.
\]
Therefore $(n',m')$ will occur in the right-hand sum in~\eqref{one_coeff}, and the same product of $c$'s will enter decomposition~\eqref{bnbm} for $b_{n'}b_{m'}$. This way we obtain a bijective correspondence between \emph{good} pairs of partitions of $(n,m)$ in the left-hand sum in~\eqref{one_coeff} and \emph{good} pairs of partitions $n'=n'^{(1)}*\dots*n'^{(r')}$, $m'=m'^{(1)}*\dots*m'^{(l')}$ of $(n',m')$  in the right-hand sum, where the latter pair is called \emph{good} when for some $0 \le j < l'$ one either has  
\[
\ell(n'^{(1)})+\ldots+\ell(n'^{(i)}) \= \ell(m'^{(1)})+\ldots+\ell(m'^{(j)}) + 1
\]
for some $1 \le i < r'$ or 
\[
\ell(n') \leq \ell(m'^{(1)})+\ldots+\ell(m'^{(j)})+1 \,.
\] 

It remains to show that products of $c$'s corresponding to pairs of partitions on either side which are not \emph{good} vanish modulo $p^s$. Let us first consider left-hand pairs. Suppose a pair of partitions $n = n^{(1)}*\dots*n^{(r)}$, $m = m^{(1)}*\dots*m^{(l)}$ is not \emph{good}. There are $r-1$ possible sums $\ell(n^{(1)})+\ldots+\ell(n^{(i)})$ for $1 \le i < r$, $l$ possible sums $\ell(m^{(1)})+\ldots+\ell(m^{(j)})+1$ for $0 \le j < l$ and $s-\ell(m)$ numbers $k$ satisfying $\ell(m)+1 \leq k < \ell(n)=s+1$. As the pair of partitions is not \emph{good} all these numbers must be distinct. Since they all belong to the range between $1$ and $s$, we then have $(r-1)+l+(s-\ell(m)) \le s$, i.e. $r+l \le \ell(m)+1$. Using~\eqref{A2} we conclude that
\[
c_{n^{(1)}} \cdot \ldots \cdot c_{n^{(r)}} \cdot c_{m^{(1)}} \cdot \ldots \cdot c_{m^{(l)}} \con 0 \mod p^a 
\]
where 
\[\bal
a \= \sum_{i=1}^r \bigl( \ell(n^{(i)})-1 \bigr) + \sum_{j=1}^l \bigl( \ell(m^{(j)})-1 \bigr) &\= \ell(n)+\ell(m)-r-l\\
 &\ge \ell(n)-1 \= s.
\eal\]

Similarly, consider a pair of partitions $n'=n'^{(1)}*\dots*n'^{(r')}$, $m'=m'^{(1)}*\dots*m'^{(l')}$ which is not \emph{good}. There are now $r'-1$ possible sums $\ell(n'^{(1)})+\ldots+\ell(n'^{(i)})$ for $1 \le i < r'$, $l'$ possible sums $\ell(m'^{(1)})+\ldots+\ell(m'^{(j)})+1$ for $0 \le j < l'$ and $s+1-\ell(n')$ numbers $k$ satisfying $\ell(n') \leq k < \ell(m')+1=s+1$. All these numbers are distinct and belong to the range between $1$ and $s$, hence we have $r'-1+l'+(s+1-\ell(n')) \le s$, so $r'+l' \le \ell(n')$. Using~\eqref{A2} we conclude that
\[
c_{n'^{(1)}} \cdot \ldots \cdot c_{n'^{(r')}} \cdot c_{m'^{(1)}} \cdot \ldots \cdot c_{m'^{(l')}} \con 0 \mod p^a 
\]
where 
\[\bal
a \= \sum_{i=1}^{r'} \bigl( \ell(n'^{(i)})-1 \bigr) + \sum_{j=1}^{l'} \bigl( \ell(m'^{(j)})-1 \bigr) &\= \ell(n')+\ell(m')-r'-l'\\
 &\ge \ell(m') \= s.
\eal\]
\end{proof}

The reader could deduce congruences~\eqref{dig2} from Lemma~\ref{lem} in a similar way.

\section*{Acknowledgment}
We want to express our gratitude to Vasily Golyshev for introducing both authors to this topic and for discussions which motivated our work all the time.

\end{document}